\long\def\comment#1\endcomment{}
\long\def\comment#1\endcomment{}
\DeclareMathOperator{\spam}{span}
\DeclareMathOperator{\diag}{diag}
\DeclareMathOperator{\Tr}{Tr}
\newtheorem{thm}{Theorem}[section]
\newtheorem{claim}[thm]{Claim}
\newtheorem{cor}[thm]{Corollary}
\newtheorem{lem}[thm]{Lemma}
\newtheorem{prop}[thm]{Proposition}
\newtheorem{defin}[thm]{Definition}
\theoremstyle{remark}\newtheorem{rem}[thm]{Remark}
\title{On uniform Hilbert Schmidt stability of groups}
\author[1]{Danil Akhtiamov}
\author[1]{Alon Dogon}
\affil[1]{Einstein Institute of Mathematics, The Hebrew University, Jerusalem, 9190401, Israel.}
\date{}
\begin{document}

\maketitle
 
\begin{abstract}
A group $\Gamma$ is said to be uniformly HS-stable if any map $\varphi : \Gamma \to U(n)$ that is almost a unitary representation (w.r.t. the Hilbert Schmidt norm) is close to a genuine unitary representation of the same dimension.
We present a complete classification of uniformly HS-stable groups among finitely generated residually finite ones. Necessity of the residual finiteness assumption is discussed. A similar result is shown to hold assuming only amenability.
\end{abstract}

\section{Introduction}
In his paper from 1982, D. Kazhdan proves the following theorem:

\begin{thm}[Kazhdan \cite{Kazh}] \label{thm:Kazh}
Let $0<\epsilon<\frac{1}{200}$, let G be an amenable group, and let $\mathcal{H}$ be a Hilbert space. 
Let $\varphi: G \to \mathcal{U}(\mathcal{H})$ be such that it satisfies
\begin{equation*} \label{eq:epsilon_rep}
\forall g,h \in G, \; \Vert \varphi(g) \varphi(h) - \varphi(gh) \Vert_{op} < \epsilon
\end{equation*}
Then there exists a representation $\pi: G \to \mathcal{U}(\mathcal{H})$ with $\forall g \in G, \; \Vert \varphi(g) - \pi(g) \Vert_{op} < 2\epsilon$
\end{thm}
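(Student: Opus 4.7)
The plan is to use the amenability of $G$ via a left-invariant mean $m$ on $\ell^\infty(G)$ to average $\varphi$ into a genuine unitary representation. First, I would extract elementary consequences of the $\epsilon$-representation property: taking $g = h = e$ and using unitarity shows $\varphi(e) \approx I$; taking $h = g^{-1}$ shows $\varphi(g^{-1}) \approx \varphi(g)^{*}$. Most importantly, for all $g, h \in G$, one has $\|\varphi(gh)\varphi(h)^{*} - \varphi(g)\|_{op} < \epsilon$, so the family $\{\varphi(gh)\varphi(h)^{*}\}_{h \in G}$ is tightly clustered around $\varphi(g)$ and is a natural candidate for averaging.

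For each $g \in G$, I would define $\pi(g) \in B(\mathcal{H})$ weakly by
\[
\langle \pi(g)\xi, \eta\rangle := m_h\bigl[\langle \varphi(gh)\varphi(h)^{*}\xi, \eta\rangle\bigr].
\]
The integrand is bounded uniformly in $h$ by $\|\xi\|\|\eta\|$, so this sesquilinear form is well defined; Riesz representation then yields a bounded operator $\pi(g)$ with $\|\pi(g)\|_{op} \leq 1$, and the hypothesis immediately gives $\|\pi(g) - \varphi(g)\|_{op} \leq \epsilon$. Left-invariance of $m$ directly yields $\pi(e) = m_h[\varphi(h)\varphi(h)^{*}] = I$ and $\pi(g^{-1}) = \pi(g)^{*}$, each via a single shift of the dummy variable.

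The main obstacle is exact multiplicativity $\pi(g_1)\pi(g_2) = \pi(g_1 g_2)$. Rewriting $\pi(g_1 g_2) = m_h[\varphi(g_1 h)\varphi(g_2^{-1}h)^{*}]$ via the substitution $h \mapsto g_2^{-1}h$ and expanding $\pi(g_1)\pi(g_2)$ weakly, the defect reduces to $m_h\bigl[\varphi(g_1 h)\bigl(\varphi(h)^{*}\pi(g_2) - \varphi(g_2^{-1}h)^{*}\bigr)\xi\bigr]$, which vanishes identically only when $\varphi$ is already a representation. Thus one-shot averaging yields only an approximately multiplicative $\pi$, and a genuine representation is produced either by (i) iterating the averaging operator $T: \varphi \mapsto \pi$, showing the defect shrinks at each step and summing a geometric-type series of corrections --- this is precisely where the explicit smallness $\epsilon < 1/200$ enters, ensuring the total perturbation stays within $2\epsilon$ --- or (ii) a convex-compactness argument, selecting a distinguished element in the weakly closed convex hull of the orbit $\{\varphi(gh)\varphi(h)^{*}\}_h$ as a fixed point of a natural $G$-action. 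Once exact multiplicativity is secured, unitarity follows from $\pi(g)\pi(g)^{*} = \pi(g)\pi(g^{-1}) = \pi(e) = I$, completing the proof.
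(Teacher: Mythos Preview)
The paper does not contain a proof of Theorem~\ref{thm:Kazh}. This theorem is Kazhdan's 1982 result, quoted in the introduction as background and attributed to~\cite{Kazh}; the paper's own contributions concern the Hilbert--Schmidt norm, not the operator norm, and nowhere is Kazhdan's theorem reproved. So there is nothing in the paper to compare your proposal against.

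That said, a brief comment on your sketch relative to Kazhdan's actual argument: your outline is broadly faithful to the original --- define $\pi(g)$ as a weak average of $\varphi(gh)\varphi(h)^{*}$ via an invariant mean, observe $\Vert\pi(g)-\varphi(g)\Vert_{op}\le\epsilon$, and then confront the failure of exact multiplicativity. However, the entire analytic content of Kazhdan's proof lies precisely in the part you leave as a plan: one must show quantitatively that the multiplicative defect of the averaged map is strictly smaller than that of $\varphi$ (roughly of order $\epsilon^{2}$ rather than $\epsilon$), so that iterating the averaging operator produces a Cauchy sequence whose limit is a genuine representation within $2\epsilon$ of $\varphi$. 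The bound $\epsilon<1/200$ is what makes the resulting geometric series sum to at most $2\epsilon$. Your option~(ii), a direct fixed-point/convexity argument, does not obviously succeed here: the natural $G$-action on the convex hull is not by isometries in operator norm in a way that singles out a fixed point, which is why Kazhdan (and subsequent expositions) use the iterative route. As written, your proposal identifies the right objects and the right obstacle but stops just before the actual work.
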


This theorem is a specific answer to a general question that was proposed by Ulam in \cite{Ulam}, which roughly asks: 
Given a map between groups that is close to being a homomorphism, can it be approximated by a genuine homomorphism? Formally, one 
can interpret this as follows (see \cite{DOT}):

\begin{defin}
Given a group $G$, a metric group $(H,d)$ and $\epsilon>0$, a map $\varphi: G \to H$ is said to be an $\epsilon$-homomorphism if
\begin{equation*}
\forall g,h \in G, \; d(\varphi(g) \varphi(h), \varphi(gh)) < \epsilon 
\end{equation*}
\end{defin}

The question is then:
\begin{defin} \label{defin:unif_stab}
Given a class $\mathscr{G}$ of groups together with a class $\mathscr{ H}$ of metric groups. We say $\mathscr{G}$ is uniformly stable with respect to $\mathscr{H}$ if for any $\delta>0$ there exists $\epsilon>0$ such that for any $G \in \mathscr G$,  any $(H,d) \in \mathscr{H}$ and any $\epsilon$-homomorphism $\varphi : G\to H$,
there is a genuine homomorphism $\pi: G\to H$ such that $d(\varphi(g),\pi(g)) < \delta$ for all $g \in G$. When $\mathscr{G} = \{\Gamma\}$ consists of a single group, we say $\Gamma$ is uniformly stable w.r.t. $\mathscr{H}$.
\end{defin}

Kazhdan answered this question positively for the family $\mathscr{G}$  of all amenable groups, and $\mathscr{H}$ of all unitary groups arising from 
arbitrary Hilbert spaces, equipped with the distance induced by the \emph{operator norm}. This was studied further by Burger, Ozawa and Thom in \cite{BOT}, 
where one of the main results also shows that $SL(n, \mathcal{O})$, where $n \geq 3,\, \mathcal{O}$ is the ring of integers of a number field, is uniformly stable w.r.t. to \emph{finite dimensional} unitary groups, equipped with the operator norm.

In this work we are interested in the same question, but where we replace arbitrary unitary groups with the operator norm 
by finite dimensional unitary groups $U(n)$ with the (normalized) Hilbert Schmidt norm:
The norm $\Vert \cdot \Vert_{HS,n}$ on $M_{n \times n}(\mathbb{C})$ is defined by $\Vert A \Vert_{HS,n} = (\frac{1}{n} Tr(AA^*))^{\frac{1}{2}} =(\frac{1}{n} \sum_{i,j} \vert a_{i,j} \vert^2)^{\frac{1}{2}}$ and induces a bi-invariant metric on $U(n)$ by $d_{HS,n} (U,V) = \Vert U - V \Vert_{HS,n}$.
There is no hope, however, to prove uniform stability for the class of all amenable (or even finite) groups with respect to the Hilbert Schmidt norms, as was mentioned in \cite{GH} and \cite{DOT}. The following theorem is the main result of our work, which completely characterizes \emph{uniformly HS-stable groups} (that is, groups that are uniformly stable w.r.t $\mathscr{H} = \{(U(n), d_{HS,n})\}$) among f.g. residually finite groups. This shows that uniform HS-stability occurs extremely rarely in contrast with the case of the operator norm: 

\begin{thm} \label{thm:main}
A finitely generated residually finite group is uniformly HS-stable if and only if it is virtually abelian. 
\end{thm}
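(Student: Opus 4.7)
The plan is to prove the two implications of Theorem \ref{thm:main} separately, with the harder work in the forward direction.

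For the converse direction (virtually abelian $\Rightarrow$ uniformly HS stable), I would reduce to two base cases and combine them via an extension argument. For finite groups $F$: given an $\epsilon$-HS homomorphism $\varphi : F \to U(n)$, the normalized character $\chi_\varphi(g) = \frac{1}{n} \Tr \varphi(g)$ is an almost-class function with inner products $\langle \chi_\varphi, \chi_\pi \rangle$ close to elements of $\frac{1}{n} \mathbb{Z}_{\geq 0}$; rounding produces a genuine representation $\rho = \bigoplus_\pi m_\pi \pi$ of $F$ of dimension $n$, and the polar decomposition of an approximate intertwiner between $\varphi$ and $\rho$ yields a unitary conjugate of $\rho$ that is HS-close to $\varphi$. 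For $\mathbb{Z}^k$: an almost-HS representation is captured by its values on a generating set, which form an almost-commuting tuple of unitaries; one then applies a tracial stability result for almost-commuting unitaries (an HS analogue of Lin's theorem, more elementary than its operator-norm counterpart, provable via spectral projection and joint diagonalization) to produce a commuting tuple nearby. Combining via Mackey-type induction from a finite-index abelian subgroup handles the virtually abelian case.

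For the harder direction (f.g.\ residually finite and not virtually abelian $\Rightarrow$ not uniformly HS stable), the key input is the following lemma I would establish first: a f.g.\ residually finite group is virtually abelian if and only if its complex finite-dimensional irreducible representations have uniformly bounded dimension. The proof uses Jordan's theorem on finite subgroups of $U(d)$ together with residual finiteness: bounded irreducible dimension forces every finite quotient of $\Gamma$ to have an abelian normal subgroup of index at most some $N(d)$, and since $\Gamma$ is finitely generated it has only finitely many subgroups of index at most $N(d)$; intersecting all such preimages then yields a finite-index abelian subgroup of $\Gamma$ by residual finiteness.

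Granted this lemma, if $\Gamma$ is not virtually abelian then there exist irreducible representations $\pi_k : \Gamma \to U(d_k)$ with $d_k \to \infty$, each factoring through a finite quotient $F_k$. I would use these to construct bad almost-representations $\varphi_k : \Gamma \to U(d_k)$ as small HS-perturbations of $\pi_k$: specifically, one conjugates $\pi_k$ by a unitary HS-close to the identity but which mixes two orthogonal invariant subspaces of an auxiliary reducible representation, changing the normalized character at a chosen element $g_0 \in \Gamma$ by an amount bounded below independently of $k$. A tracial ultraproduct reformulation clarifies this: the ultraproduct $\prod_\omega (U(d_k), d_{HS,d_k})$ is the unitary group of a tracial von Neumann algebra $\mathcal{M}$, and the sequence $(\pi_k)$ defines a trace-preserving homomorphism $\Gamma \to \mathcal{U}(\mathcal{M})$; perturbing this inside $\mathcal{U}(\mathcal{M})$ and arguing that the resulting sequence does not lift to honest representations $\Gamma \to U(d_k)$ produces the obstruction.

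The main obstacle is the last step: ruling out HS-approximation of $\varphi_k$ by \emph{any} genuine $d_k$-dimensional representation of $\Gamma$, not merely the natural candidates. This character-rigidity step requires exploiting the abundance of finite quotients with rich representation-theoretic structure (arising from $\Gamma$ not being virtually abelian) together with residual finiteness to control the set of available characters of dimension $d_k$ uniformly in $k$.
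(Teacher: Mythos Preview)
There are gaps in both directions.

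For the forward direction, your Jordan--Schur lemma matches the paper's Lemma~\ref{lem:irrep_dims}(2), but your instability construction does not work as stated. Conjugating $\pi_k$ by a unitary yields an isomorphic \emph{genuine} representation, and a small HS perturbation of $\pi_k$ changes its normalized character only by a small amount, so neither mechanism produces the ``character change bounded below independently of $k$'' you claim. You correctly flag the real obstacle---ruling out approximation by \emph{every} representation of the given dimension---but your ultraproduct/character-rigidity sketch leaves it unresolved. The paper's device is concrete and you are missing it: given an irreducible $\pi:\Gamma\to U(n)$, compress to the upper-left $(n{-}1)\times(n{-}1)$ block and unitarize to obtain a $O(n^{-1/2})$-homomorphism $\psi':\Gamma\to U(n{-}1)$. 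If some genuine $\rho':\Gamma\to U(n{-}1)$ were within $\tfrac12$ of $\psi'$, one checks that the closed convex hull of $\{\rho'(g)Q^*\pi(g)^*\}_{g\in\Gamma}\subset\mathrm{Hom}(\mathbb{C}^n,\mathbb{C}^{n-1})$ (with $Q:\mathbb{C}^{n-1}\hookrightarrow\mathbb{C}^n$ the inclusion) avoids $0$ and is invariant under the isometric action $g\cdot A=\rho'(g)A\pi(g)^*$; its unique norm-minimizer is then a nonzero intertwiner $\mathbb{C}^n\to\mathbb{C}^{n-1}$ between $\pi$ and $\rho'$, which Schur's lemma forces to be injective---contradicting the dimensions. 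No character accounting or ultraproduct is needed.

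For the converse direction your route differs from the paper's and is over-optimistic in two places. First, an almost-commuting-unitaries theorem only controls $\varphi$ on a generating set; uniform HS stability requires $\Vert\varphi(g)-\rho(g)\Vert_{HS}<\delta$ for \emph{all} $g\in\mathbb{Z}^k$, and the obvious candidate $\rho(m)=\varphi(1)^m$ has error growing with $|m|$---the paper in fact remarks that HS stability even of $\mathbb{Z}$ was apparently not previously known. Second, ``Mackey-type induction'' hides the content: correcting $\varphi|_A$ to a genuine $A$-representation gives no reason the corrected object extends to $\Gamma$. The paper sidesteps both issues by invoking the De~Chiffre--Ozawa--Thom flexible stability theorem for amenable groups (Theorem~\ref{thm:DOT_main}): any $\varphi$ is $161\epsilon$-close to a compression $U^*\pi U$ of a genuine $\pi:\Gamma\to U(m)$ with $m<(1+2500\epsilon^2)n$, and then the bound $d$ on irreducible dimensions lets one truncate the block decomposition $\pi\cong\bigoplus_i\pi_i$ back to dimension exactly $n$ at cost $O(\sqrt{d}\,\epsilon)$.
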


The "only if" direction uses the fact that finitely generated residually finite non virtually abelian groups have irreducible representations of arbitrarily large dimensions. 
As a result, we prove they are not uniformly  HS-stable. This is done by replacing a true irreducible representation of dimension $n$ by its projection to an $(n-1)$-dimensional subspace. In this way we obtain "bad" $\epsilon-$representations which are bounded away from all true representations, while $\epsilon$ tends to $0$.  This construction generalises observations made already in \cite{DOT}

The construction mentioned before shows instability. However, it suggests that a relaxed notion of uniform stability, often called \emph{flexible} uniform stability, might still hold for many groups. 
Indeed, a recent result proved by De Chiffre, Ozawa and Thom does hold  for all amenable groups:

\begin{thm}[De Chiffre, Ozawa and Thom \cite{DOT}] \label{thm:DOT_main}
Let $\Gamma$ be a countable amenable group. For any $\epsilon > 0, n \in \mathbb{N}$ and an $\epsilon$-homomorphism $\varphi:\Gamma \to U(n)$ with respect to $d_{HS,n}$, there exists a unitary representation $\pi:G\to U(m)$ for some $n \leq m < n + 2500\epsilon^{2}n$ and an isometry $U: \mathbb{C}^n \hookrightarrow \mathbb{C}^m$ such that
$$ \forall g \in \Gamma, \; \Vert \varphi(g) - U^* \pi(g) U \Vert_{HS,n} < 161\epsilon $$
\end{thm}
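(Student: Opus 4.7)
My plan is to leverage the amenability of $\Gamma$ via a Følner averaging procedure: first amplify $\varphi$ using the left regular representation to obtain an $\epsilon$-homomorphism on a larger Hilbert space, then cut down to an almost-invariant finite-dimensional subspace, and finally correct into a genuine representation while carefully controlling the dimension inflation.

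First, I would set $\tilde\varphi(g) := \lambda_g \otimes \varphi(g) \in U\bigl(\ell^2(\Gamma) \otimes \mathbb{C}^n\bigr)$, where $\lambda$ is the left regular representation of $\Gamma$. Since $\lambda$ is an honest representation, a direct computation shows that $\tilde\varphi$ remains an $\epsilon$-homomorphism with respect to the natural tracial Hilbert--Schmidt norm on $L(\Gamma) \bar\otimes M_n(\mathbb{C})$. The advantage of this amplification is twofold: it provides the privileged subspace $\mathcal{D} := \delta_e \otimes \mathbb{C}^n$ as a candidate isometric embedding of $\mathbb{C}^n$, and it creates room for surgery via the almost-invariance of the $\Gamma$-action on $\ell^2(\Gamma)$.

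Next, given sufficiently small defect $\epsilon$, pick a Følner set $F \subset \Gamma$ that is $(\delta, S)$-invariant with $\delta = O(\epsilon^2)$ for a suitable finite $S \subset \Gamma$, and let $P_F$ denote the projection onto $\ell^2(F) \otimes \mathbb{C}^n$. The compressed operators $\tilde\varphi_F(g) := P_F \tilde\varphi(g) P_F$ are approximately unitary and approximately multiplicative on $\ell^2(F) \otimes \mathbb{C}^n$, with both defects controlled in HS-norm by the Følner ratio $|gF \triangle F|/|F|$. A polar decomposition combined with a fixed-point-style averaging over $F$ should then produce from $\tilde\varphi_F$ an honest unitary representation $\pi : \Gamma \to U(\mathcal{H}_F)$ on some subspace $\mathcal{H}_F \subset \ell^2(F) \otimes \mathbb{C}^n$, together with an embedding of $\mathcal{D}$ into $\mathcal{H}_F$ whose compression of $\pi(g)$ is HS-close to $\varphi(g)$.

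The main obstacle, and where the quantitative heart of the theorem lies, is achieving the sharp dimensional inflation $m - n < 2500\epsilon^2 n$ rather than the naive $\dim \mathcal{H}_F \le n|F|$. I expect this to require the observation that $\pi$ effectively splits into an $n$-dimensional "diagonal block" carrying a near-copy of $\varphi$ and a large "bulk" block orthogonal to $\mathcal{D}$, of which only an $O(\epsilon^2 n)$-dimensional sliver is actually needed to absorb the unitarity and multiplicativity defects of $\tilde\varphi_F$; the remainder can be trimmed without affecting the compression onto $\mathcal{D}$. A careful spectral analysis of $\tilde\varphi_F(g)$ near the boundary of $F$, together with a chain of triangle-inequality estimates in HS-norm between $\varphi$, $\tilde\varphi$, its Følner cut-off, the polar correction, and the final truncation, should yield the specific constants $2500$ and $161$. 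Notably, the proof cannot yield exact (rigid) HS-stability, since rigid HS-stability is known to fail even for some finite groups; the flexibility of enlarging the dimension is essential, and is precisely what the amenability of $\Gamma$ affords.
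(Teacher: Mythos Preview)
The paper does not contain a proof of this statement at all: Theorem~\ref{thm:DOT_main} is quoted from De~Chiffre, Ozawa and Thom \cite{DOT} and used as a black box in the proof of Theorem~\ref{thm:amenable_case}. There is therefore nothing in the paper to compare your proposal against.

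As for the proposal itself, the overall shape is in the right spirit (amplify into $L(\Gamma)\bar\otimes M_n$, exploit amenability to average, then compress), but two steps are genuine gaps rather than details. First, the passage ``a polar decomposition combined with a fixed-point-style averaging over $F$ should then produce \dots\ an honest unitary representation $\pi$'' is the entire content of the theorem, and F{\o}lner averaging alone does not do this: averaging the compressed operators $\tilde\varphi_F(g)$ over a F{\o}lner set does not produce a multiplicative map, and polar-decomposing afterwards does not restore multiplicativity. The actual argument in \cite{DOT} (following Gowers--Hatami) averages the positive operator $T=\int_\Gamma \tilde\varphi(g)^*\tilde\varphi(g)\,d\mu(g)$ against an invariant mean $\mu$ to obtain something close to a projection, and it is the near-invariance of $T$ that yields, via a Stinespring-type dilation, a genuine representation on the range of the spectral projection of $T$. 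Second, your dimension-trimming paragraph correctly identifies the hard point but does not supply the mechanism: the bound $m-n<2500\epsilon^2 n$ comes from the trace estimate $\Vert T-I\Vert_{HS}\le O(\epsilon)$, which forces the spectral projection of $T$ at level $\tfrac12$ to have rank in $[n, n+O(\epsilon^2)n]$; there is no separate ``splitting into diagonal and bulk blocks'' step. Without these two ingredients your outline does not close.
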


This enables us to prove the main technical novelty of the article, by showing that for an amenable group with irreducible finite dimensional unitary representations of bounded dimensions, 
we can deform the $\varepsilon$-representation into a genuine representation without paying the price of enlarging the dimension.

\begin{thm}\label{thm:amenable_case}
\emph{(1)} Given $d \in \mathbb{N}$, the class $\mathscr{H}$ of amenable groups for which all irreducible finite dimensional unitary representations are of dimension $\leq d$ is uniformly stable with respect to the family $\{(U(n), d_{HS,n})\}$ of unitary groups with the corresponding Hilbert Schmidt norms.

\emph{(2)} Let $\Gamma$ be an amenable group. Then $\Gamma$ is uniformly HS-stable if and only if there exists  $d \in \mathbb{N}$, such that all finite dimensional irreducible unitary representations of $\Gamma$ are of dimension $\leq d$. 

\end{thm}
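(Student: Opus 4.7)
Part (2) follows from Part (1) together with Proposition \ref{prop:instab}: the ``only if'' direction is the contrapositive of Proposition \ref{prop:instab} (absence of a uniform bound $d$ forces irreducibles of arbitrarily large dimension, hence failure of uniform HS stability), while the ``if'' direction is Part (1) specialized to the singleton class $\{\Gamma\}$. Thus the substance of the theorem lies in Part (1).

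For Part (1), the strategy is to use Theorem \ref{thm:DOT_main} of De Chiffre--Ozawa--Thom as a black box to reduce to a flexibly-stable approximation, and then ``deflate'' back to dimension exactly $n$ using the bounded-irreducible-dimension hypothesis. Given $\delta>0$ and an $\epsilon$-homomorphism $\varphi: \Gamma \to U(n)$ with $\Gamma \in \mathscr H$, Theorem \ref{thm:DOT_main} yields a genuine representation $\pi: \Gamma \to U(m)$ with $n \leq m < (1 + 2500\epsilon^2)n$ and an isometry $U: \mathbb{C}^n \hookrightarrow \mathbb{C}^m$ such that $\Vert \varphi(g) - U^*\pi(g) U \Vert_{HS, n} < 161\epsilon$ for all $g$. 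Lemma \ref{lem: unireps} together with the hypothesis then decomposes $\pi$ as a direct sum of irreducibles of dimensions $n_1, \ldots, n_k \leq d$.

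Letting $P := UU^*$, the key estimate is that $P$ approximately commutes with $\pi$ in HS norm: since $\Vert \varphi(g)\Vert_{HS,n} = 1$ forces $\Vert U^*\pi(g) U\Vert_{HS,n} \approx 1$, the Pythagorean identity $\Vert P\pi(g)\Vert_{HS,m}^2 = \Vert P\pi(g)P\Vert_{HS,m}^2 + \Vert P\pi(g)(I-P)\Vert_{HS,m}^2 = n/m$ gives $\Vert [\pi(g), P]\Vert_{HS,m} = O(\sqrt\epsilon)$. Amenability of $\Gamma$ then allows averaging $\pi(g) P \pi(g)^*$ against an invariant mean to produce a positive operator $\widetilde P$ in the commutant of $\pi$ with trace $n$, still $O(\sqrt\epsilon)$-close to $P$. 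Spectrally rounding $\widetilde P$ inside the commutant (which decomposes as a direct sum of matrix algebras over isotypic components of $\pi$) yields a projection $Q$ onto a $\pi$-invariant subspace of rank $n'$ with $|n - n'| = O(n\sqrt\epsilon)$. Adjusting by at most $O(n\sqrt\epsilon) + d$ dimensions of irreducible summands (padding with copies of the trivial representation when we need to increase rank, or removing at most $d$ dimensions' worth of components when we need to decrease it) and transporting via an isometry close to $U$ produces a genuine representation $\rho: \Gamma \to U(n)$ with $\Vert \rho(g) - \varphi(g) \Vert_{HS, n} \leq O(\epsilon^{1/4}) + O(\sqrt{d/n})$.

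The main obstacle is the exact dimension-matching: the rank of $Q$ lies in the sub-lattice $\{\sum n_i r_i\}$ of sub-representation dimensions of $\pi$ and may miss $n$ by as much as $d$, introducing an $O(\sqrt{d/n})$ error that is harmless for large $n$ but may exceed $\delta$ when $n$ is small. This boundary case should be handled separately: fix $N_0 = N_0(\delta, d)$ large enough that $\sqrt{d/N_0}$ is comparable to $\delta$, and for $n \leq N_0$ require $\epsilon$ small enough that $2500\epsilon^2 N_0 < 1$, forcing $m = n$; then $U \in U(n)$ is a unitary and $U^*\pi U$ is already a genuine $n$-dimensional representation at distance $< 161\epsilon < \delta$ from $\varphi$. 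Combining the two regimes and optimizing $\epsilon$ in terms of $\delta$ and $d$ (uniformly over $\Gamma \in \mathscr H$ and $n \in \mathbb N$) completes the proof.
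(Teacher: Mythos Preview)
Your outline is essentially correct and would yield a valid proof of uniform HS stability, but it takes a genuinely different route from the paper's and obtains a weaker quantitative dependence.

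The paper's argument is more direct and does \emph{not} invoke amenability a second time. After applying Theorem~\ref{thm:DOT_main} and conjugating so that $\pi$ is block-diagonal in the standard basis $e_1,\dots,e_m$, the paper observes that for \emph{any} rank-$n$ projection $P$ on $\mathbb{C}^m$ one has $\Vert I_m - P\Vert_{HS,m}^2 = (m-n)/m \leq 2500\epsilon^2$, and from this alone deduces that the $m\times m$ matrix $[P]_n Q^*$ (first $n$ columns of $P$, padded by zeros) satisfies $\Vert [P]_nQ^* - I_m\Vert_{HS,m} \leq 50\sqrt{2}\,\epsilon$. No alignment of $P=UU^*$ with the block structure of $\pi$ is needed. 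One then simply truncates $\pi$ to its first $r$ blocks fitting inside dimension $n$ (padding the remaining $\leq d-1$ coordinates with $1$'s) to form $\pi'$, sets $M = U^*[P]_n$, replaces $M$ by a nearby unitary $R$ via Lemma~\ref{lem:unitary_stab}, and defines $\rho = R\pi'R^*$. The resulting error is \emph{linear} in $\epsilon$, namely $\delta = O\big((\sqrt{d}+1)\epsilon\big)$.

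Your approach---showing $\Vert[\pi(g),P]\Vert_{HS,m}=O(\sqrt\epsilon)$, averaging $\pi(g)P\pi(g)^*$ over an invariant mean to land in the commutant, spectrally rounding there, and then correcting the rank---is conceptually natural and does work, but it pays twice: it re-uses amenability (already consumed by Theorem~\ref{thm:DOT_main}), and the commutator bound of order $\sqrt\epsilon$ propagates to a final estimate of order $\epsilon^{1/4}$ rather than $\epsilon$. The ``transporting via an isometry close to $U$'' step also requires more care than you indicate (one must exhibit a partial isometry intertwining $\mathrm{Im}(P)$ and the adjusted invariant range and control its defect), though this is routine. In short: your argument is correct but longer and quantitatively weaker; the paper's purely linear-algebraic truncation after block-diagonalizing is the cleaner route.
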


Given this result, we deduce the "if" direction of Theorem \ref{thm:main}. In fact, for any $d$, we show that the relation between $\delta$  and $\epsilon$ does not depend on the choice of the group as long as it contains an abelian subgroup of index $\leq d$:

\begin{cor} \label{cor:vir_abelian_case}
For any $d \in \mathbb{N}$, the class $\mathscr{G}$ of countable virtually abelian groups with an abelian subgroup of index $\leq d$ is uniformly stable
with respect to the family $\{(U(n), d_{HS,n})\}$ of unitary groups with the corresponding Hilbert Schmidt norms.
\end{cor}

To the best of the authors' knowledge, the last result seems to be new even for the infinite cyclic group $\mathbb{Z}$, and for the collection of all finite abelian groups. It is also interesting to compare with the situation of approximate actions, that is, considering the family of symmetric groups with normalised Hamming metrics $\mathscr{H}:=\{(S_n, d_{Hamm})\}$. Becker and Chapman \cite{BC} showed that $\mathbb{Z}$ is \emph{not} uniformly stable w.r.t. $\mathscr{H}$, where as we show it \emph{is} uniformly stable w.r.t. $\{U(n), d_{HS,n}\}$. This seems to be the first result for which $\mathscr{H}$ and $\{U(n), d_{HS,n}\}$ behave differently.

Becker and Chapman \cite{BC} also prove an analog of Theorem \ref{thm:DOT_main} in the realm of symmetric groups with the Hamming distances.
In their manuscript they explain how these results are relevant for the fields of property testing and quantum information theory. It seems that our results might be interpreted in a similar way, the interested reader can find more about connections between stability of groups and quantum information theory in \cite{VidickBlog}.

\begin{rem}

Theorem \ref{thm:main} is not valid without the assumption of $\Gamma$ being residually finite. For example, by \cite{JushenkoMonod} there exists an infinite finitely generated simple amenable group  $D$.  Such a group is not virtually abelian.  However, every finite dimensional representation of $D$ is trivial (by Malcev's theorem \cite{Malcev}) and so Theorem \ref{thm:amenable_case} implies that $D$ is uniformly HS-stable. 

\end{rem}

{\it Question.} This paper shows that within the classes of (a) residually finite groups and (b) amenable groups, the property of uniform HS-stability is equivalent to the property that all finite dimensional irreducible unitary representations are of bounded dimension. A common generalization of (a) and (b) is the family of sofic groups. It is therefore natural to ask if this characterisation is still valid for sofic or even hyperlinear groups.

\section*{Acknowledgements}
We would like to express deep gratitude to our advisor Alex Lubotzky and to Michael Chapman for the very useful conversations, for their help and for initially pointing us towards the questions addressed here. This work is part of the MSc theses of both authors. It was supported by the European Research Council (ERC) under the European Unions Horizon 2020 research and innovation program (Grant No. 692854).

\section{Large irreducible representations and instability}

Throughout this article, all groups are supposed to be discrete and countable. The goal in this section is to assemble the tools needed to prove the "only if" direction of Theorem \ref{thm:main}. We will prove an \emph{instability} result, which generalizes remarks made in \cite{DOT}, \cite{GH}. But first, we need two known lemmas. The following can be found for example as Lemma 6.1. in \cite{GH}.

\begin{lem} \label{lem:HS_prop}
The (normalised) Hilbert Schmidt norm satisfies:

\emph{(1)} (Unitary Invariance) For any $A \in M_n(\mathbb{C}),\; U,V \in U(n)$, we have 
$\Vert UAV \Vert_{HS,n} = \Vert A \Vert_{HS,n}$

\emph{(2)} For any $n,m \in \mathbb{N}$, $A \in M_{n \times m}(\mathbb{C}), B \in M_{m \times m} (\mathbb{C})$, $C \in M_{m \times n}(\mathbb{C})$  we have $\Vert ABC \Vert_{HS,n} \leq \sqrt{\frac{m}{n}}\Vert A \Vert_{op} \Vert B \Vert_{HS,m} \Vert C \Vert_{op}$
\end{lem}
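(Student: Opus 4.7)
The plan is to reduce both parts to basic trace identities and the classical sub-multiplicativity of the unnormalised Hilbert--Schmidt norm against the operator norm, and then carefully keep track of the normalisation constants.

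For part (1), I would simply write
\[
\Vert UAV \Vert_{HS,n}^{2} = \tfrac{1}{n}\Tr\bigl(UAV(UAV)^{*}\bigr) = \tfrac{1}{n}\Tr\bigl(UAVV^{*}A^{*}U^{*}\bigr) = \tfrac{1}{n}\Tr\bigl(AA^{*}U^{*}U\bigr) = \tfrac{1}{n}\Tr(AA^{*}),
\]
where I use $VV^{*}=I_{n}$, cyclicity of the trace, and $U^{*}U=I_{n}$. This is the unnormalised computation.

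For part (2), the key observation is the relationship $\Vert X \Vert_{HS,k} = k^{-1/2}\Vert X \Vert_{HS}$ between the normalised and unnormalised HS norms on $M_{k\times k}(\mathbb{C})$ (and more generally one can use $\Vert X\Vert_{HS}^{2} = \Tr(XX^{*})$ for rectangular $X$). Then I would invoke the classical inequality
\[
\Vert XY \Vert_{HS} \leq \Vert X \Vert_{op}\,\Vert Y \Vert_{HS}, \qquad \Vert YZ \Vert_{HS} \leq \Vert Y \Vert_{HS}\,\Vert Z \Vert_{op},
\]
which follows column-by-column from $\Vert Xy \Vert_{2} \leq \Vert X \Vert_{op}\,\Vert y \Vert_{2}$ (for the first) and row-by-row, or by taking adjoints (for the second). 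Combining these gives $\Vert ABC \Vert_{HS} \leq \Vert A \Vert_{op}\,\Vert B \Vert_{HS}\,\Vert C \Vert_{op}$ as an inequality of unnormalised norms, with $ABC \in M_{n\times n}(\mathbb{C})$ and $B \in M_{m\times m}(\mathbb{C})$.

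Finally, I insert the normalisations: $\Vert ABC \Vert_{HS,n} = n^{-1/2}\Vert ABC \Vert_{HS}$ and $\Vert B \Vert_{HS} = m^{1/2}\Vert B \Vert_{HS,m}$, which together yield the claimed factor $\sqrt{m/n}$. There is no real obstacle here, just bookkeeping; the only mild subtlety is to be consistent about whether the three $HS$-norms inside the classical inequality are unnormalised, which is why I would phrase the intermediate step using the raw $\Tr(\cdot\,\cdot^{*})$ definition rather than the normalised one.
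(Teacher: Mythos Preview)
Your argument is correct. Both parts are handled cleanly: part~(1) is the standard cyclicity-of-trace computation, and in part~(2) you correctly reduce to the unnormalised inequality $\Vert ABC\Vert_{HS}\le \Vert A\Vert_{op}\Vert B\Vert_{HS}\Vert C\Vert_{op}$ and then track the factors $n^{-1/2}$ and $m^{1/2}$ coming from the two different normalisations.

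Note, however, that the paper does not actually give its own proof of this lemma; it simply states it and cites Lemma~6.1 of \cite{GH} as a reference. So there is nothing to compare your approach against --- you have supplied a complete proof where the paper chose to omit one.
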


The next lemma can be interpreted as a stability result for the relation defining unitary matrices. A very similar result can be found for example as Lemma 2.8 in \cite{SV}. There is a slight difference in formulation, so we include a proof for completeness. 

\begin{lem} \label{lem:unitary_stab}
Let $M \in M_n(\mathbb{C})$. Then there exists a unitary $R \in U(n)$ such that $\Vert M - R \Vert_{HS,n} \leq \Vert M^* M - I_n \Vert_{HS,n}$
\end{lem}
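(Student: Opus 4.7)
The plan is to use singular value decomposition to reduce the inequality to an elementary termwise estimate on nonnegative real numbers.

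First I would write the singular value decomposition $M = U \Sigma V^*$, where $U, V \in U(n)$ and $\Sigma = \diag(\sigma_1, \ldots, \sigma_n)$ with $\sigma_i \geq 0$ the singular values of $M$. The natural candidate for the unitary approximant is $R := U V^* \in U(n)$, which corresponds to ``rounding all singular values to $1$''. This is essentially the unitary part of the polar decomposition of $M$ (the SVD dodges the issue of $M$ being non-invertible, which would otherwise be the only mildly annoying point).

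Next I would apply unitary invariance (part (1) of Lemma \ref{lem:HS_prop}) twice. On one hand,
\begin{equation*}
\Vert M - R \Vert_{HS,n} = \Vert U(\Sigma - I_n) V^* \Vert_{HS,n} = \Vert \Sigma - I_n \Vert_{HS,n}.
\end{equation*}
On the other hand, since $M^*M = V \Sigma^2 V^*$,
\begin{equation*}
\Vert M^*M - I_n \Vert_{HS,n} = \Vert V(\Sigma^2 - I_n) V^* \Vert_{HS,n} = \Vert \Sigma^2 - I_n \Vert_{HS,n}.
\end{equation*}
So the claim reduces to $\Vert \Sigma - I_n \Vert_{HS,n} \leq \Vert \Sigma^2 - I_n \Vert_{HS,n}$.

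Finally, I would prove this termwise using that $\sigma_i \geq 0$. Squaring both sides, it suffices to show $(\sigma_i - 1)^2 \leq (\sigma_i^2 - 1)^2$ for each $i$, and this is immediate by factoring
\begin{equation*}
(\sigma_i^2 - 1)^2 = (\sigma_i - 1)^2 (\sigma_i + 1)^2 \geq (\sigma_i - 1)^2,
\end{equation*}
since $\sigma_i + 1 \geq 1$. Summing over $i$ and dividing by $n$ yields the desired inequality. There is no real obstacle here; the only thing to be mindful of is choosing $R$ so that unitary invariance strips off both $U$ and $V$ simultaneously, which is exactly what $R = UV^*$ achieves.
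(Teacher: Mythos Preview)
Your proof is correct and follows essentially the same approach as the paper: both take the singular value decomposition, define the unitary approximant as the product of the two unitary factors (your $R = UV^*$), use unitary invariance to reduce to the diagonal case, and then apply the elementary inequality $|\sigma - 1| \leq |\sigma^2 - 1|$ for $\sigma \geq 0$.
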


\begin{proof}
Write the Singular Value Decomposition 
$M = S \Sigma V^*$ where $V,S \in U(n)$ and $\Sigma = \diag(\lambda_1, \dots, \lambda_n)$ 
is a diagonal matrix with the singular values $\lambda_i \geq 0$. 
We have: 

$$\Vert M^* M - I_n \Vert_{HS,n} = \Vert V \Sigma S^* S \Sigma V^* - I_n\Vert_{HS,n} = \Vert V \Sigma^2 V^* - I_n \Vert_{HS,n} = \Vert \Sigma^2 - I_n \Vert_{HS,n}$$

Where we used unitary invariance (\ref{lem:HS_prop}, (1)) in the last equality. Now: $\Vert \Sigma^2 - I_n \Vert_{HS,n}^2 = \frac{1}{n} \sum_{i=1}^n (\lambda_i^2  - 1)^2$.
We always have for $\lambda \geq 0$, $\vert \lambda ^2 - 1 \vert = \vert(\lambda - 1)\vert  \vert(\lambda + 1)\vert \geq \vert(\lambda - 1)\vert$
So we also know that 
$$\Vert \Sigma - I_n \Vert_{HS,n}^2 = \frac{1}{n} \sum_{i=1}^n (\lambda_i - 1)^2 \leq \frac{1}{n} \sum_{i=1}^n (\lambda_i^2  - 1)^2 = \Vert M^* M - I_n\Vert_{HS,n}^2  $$

Now, define the unitary $R = SV^* \in U(n)$. Observe:
$$ \Vert M - R \Vert_{HS,n} = \Vert S \Sigma V^* - SV^* \Vert_{HS,n} = \Vert S (\Sigma - I_n) V^* \Vert_{HS,n} = \Vert \Sigma - I_n \Vert_{HS,n}  \leq \Vert M^* M - I_n\Vert_{HS,n}$$ 
Where in the last equality, we again used the fact that the Hilbert Schmidt norm is unitarily invariant (\ref{lem:HS_prop}, (1)).
\end{proof}

\begin{prop} \label{prop:instab}
If $\Gamma$ is a group with irreducible finite dimensional unitary representations of arbitrarily large dimensions, then $\Gamma$ is \emph{not} uniformly HS-stable.
\end{prop}
\begin{proof}
Let $36 \leq n \in \mathbb{N}$ be such that there exists an irreducible unitary representation $\pi : \Gamma \to U(n)$. Denote by $\mathcal{M} = M_n(\mathbb{C})$ the algebra of $n \times n$ matrices.
Let $P$ be the orthogonal projection operator onto the first $n-1$ coordinates, so that $P$ has rank $n-1$. We will work with the "corner" $P\mathcal{M}P$, which consists of matrices in $\mathcal{M}$ whose last column and row consist of zeros. 
As such, we have a $*$ algebra isomorphism $F: M_{n-1}(\mathbb{C}) \xrightarrow{\sim} P\mathcal{M}P \subset \mathcal{M}$ defined by $F(A) = QAQ^*$ where $Q:\mathbb{C}^{n-1} \hookrightarrow \mathbb{C}^n$ is defined by $Q(e_i) = e_i$ for $i \leq n-1$, $\{e_i\}_{i=1}^n$ the standard basis. Visually:
$$
A \in M_{n-1}(\mathbb{C}) \mapsto
\begin{pmatrix}
A & 0 \\
0 & 0
\end{pmatrix}
\in M_n(\mathbb{C})
$$
Further, we have: $\Vert A \Vert_{HS,n-1} = c \Vert F(A) \Vert_{HS,n}$ for $A \in M_{n-1}(\mathbb{C})$ where $c=\sqrt{\frac{n}{n-1}}$.
Similarly, we can also define $F: Hom_{\mathbb{C}}(\mathbb{C}^n, \mathbb{C}^{n-1}) \xrightarrow{\sim} P\mathcal{M}$ by $F(B) = QB$. We then get the following "functoriality": for $A \in M_{n-1}(\mathbb{C}), \; B \in Hom_{\mathbb{C}}(\mathbb{C}^n, \mathbb{C}^{n-1}), C \in \mathcal{M}$, $F(AB)=F(A)F(B),\;F(BC)=F(B)C$.

Define the map $\varphi : \Gamma \to \mathcal{M}$ by $\varphi(g) = P \pi(g) P$, that is, we simply look at the restriction of $\pi(g)$ to the left upper $(n-1) \times (n-1)$ corner, and surround it by zeros. By definition, $\varphi$ maps to $P \mathcal{M} P$. Let $\varphi' = F^{-1} \circ \varphi: \Gamma \to M_{n-1}(\mathbb{C})$.
We claim $\varphi'$ is a $\frac{c}{\sqrt{n}}$-homomorphism (although technically $\varphi'$ is not valued in $U(n-1)$ yet). Indeed, we have $\Vert I_n - P \Vert_{HS,n}^2 = \frac{1}{n}$, so by (\ref{lem:HS_prop}, (1)) and (\ref{lem:HS_prop}, (2)) and the fact that $\Vert P \Vert_{op} = 1$:

\begin{align*}
\Vert \varphi'(g)\varphi'(h) - \varphi'(gh) \Vert_{HS,n-1} & = c \Vert \varphi(g)\varphi(h) - \varphi(gh) \Vert_{HS,n} = c\Vert P\pi(g) P \pi(h) P - P\pi(gh)P \Vert_{HS,n} \\
& \leq c \Vert \pi(g) P \pi(h) - \pi(g)\pi(h) \Vert_{HS,n} = c\Vert P - I_n \Vert_{HS,n} \leq \frac{c}{\sqrt{n}}
\end{align*}

Since $F(I_{n-1}) = P$, by (\ref{lem:HS_prop}, (1)) and (\ref{lem:HS_prop}, (2)):

\begin{align*}
\Vert I_{n-1} - (\varphi'(g))^* \varphi'(g) \Vert_{HS,n-1} & = c\Vert P - P \pi(g)^* P \pi(g) P \Vert_{HS,n} = c\Vert PI_nP- P \pi(g)^* P \pi(g) P \Vert_{HS,n}\\
& \leq c\Vert I_n - \pi(g)^* P \pi(g) \Vert_{HS,n} = c\Vert I_n - P \Vert_{HS,n} \leq \frac{c}{\sqrt{n}}
\end{align*}

Using Lemma \ref{lem:unitary_stab} we can find unitaries $\psi'(g)$ in $U(n-1)$ such that $\Vert \varphi'(g) - \psi'(g) \Vert_{HS,n-1} \leq \frac{c}{\sqrt{n}}$. Define $\psi = F \circ \psi': \Gamma \to \mathcal{U}(P\mathcal{M}P)$.
Since $\Vert \varphi'(g) \Vert_{op} \leq \Vert Q \Vert_{op} \Vert P \Vert_{op} \Vert \pi(g) \Vert_{op} \Vert P \Vert_{op} \Vert Q^* \Vert_{op} \leq 1,\; \Vert \psi'(g) \Vert_{op} = 1$, we get:
\begin{align*}
\Vert \psi'(g)\psi'(h) - \psi'(gh) \Vert_{HS,n-1} & \leq \Vert \psi'(g)\psi'(h) - \varphi'(gh) \Vert_{HS,n-1} + \frac{c}{\sqrt{n}} \\
& \leq \Vert \psi'(g)\psi'(h) - \varphi'(g)\varphi'(h) \Vert_{HS,n-1} + \frac{2c}{\sqrt{n}} \\
& \leq \Vert (\psi'(g) - \varphi'(g))\psi'(h) \Vert_{HS,n-1} + \Vert \varphi'(g)(\psi'(h) - \varphi'(h)) \Vert_{HS,n-1} +\frac{2c}{\sqrt{n}} \\
& \leq \frac{4c}{\sqrt{n}}
\end{align*}
Thus, $\psi': \Gamma \to U(n-1)$ is a $\frac{4}{\sqrt{n-1}}$-homomorphism.

We now claim that it is bounded away from all true unitary representations of dimension $n-1$ by $\frac{1}{2}$. Assume $\rho': \Gamma \to U(n-1)$ is a representation such that $\Vert \rho'(g) - \psi'(g) \Vert_{HS,n-1} < \frac{1}{2}$ for all $g\in\Gamma$. Define $\rho = F\circ\rho': \Gamma \to \mathcal{U}(P\mathcal{M}P)$.
We consider the bounded family of matrices $\{\rho(g) \pi(g)^*\}_{g\in\Gamma} \subset P\mathcal{M}$, and take its closed convex hull to be $\mathcal{C} \subset P\mathcal{M}$. Recall we had $\Vert \varphi(g) - \psi(g) \Vert_{HS,n} \leq \frac{1}{\sqrt{n}}, \; \Vert \rho(g) - \psi(g) \Vert_{HS,n} < \frac{1}{2c}$. By Lemma \ref{lem:HS_prop} again, we have for all $g$:

\begin{align*}
 \Vert \pi(g) - P \pi(g) P \Vert_{HS,n} &\leq \Vert \pi(g) - \pi(g) P \Vert_{HS,n} + \Vert (I_n - P) \pi(g) P \Vert_{HS,n} \leq 2 \Vert I_n - P \Vert_{HS,n} = \frac{2}{\sqrt{n}} \\
\Vert I_n - \rho(g) \pi(g)^* \Vert_{HS,n} & \leq \Vert I_n - \varphi(g) \pi(g)^* \Vert_{HS,n} + \Vert (\varphi(g) - \rho(g))\pi(g)^* \Vert_{HS,n} \\
& \leq \Vert I_n - \varphi(g) \pi(g)^* \Vert_{HS,n} + \Vert \varphi(g) - \psi(g) \Vert_{HS,n} + \Vert \rho(g) - \psi(g) \Vert_{HS,n} \\
& < \Vert I_n - P \pi(g) P \pi(g)^* \Vert_{HS,n} + \frac{1}{\sqrt{n}} + \frac{1}{2c} \leq \frac{3}{\sqrt{n}} + \frac{1}{2} 
\end{align*}

Notice we used $c>1$. And so $\mathcal{C}$ is contained in the ball $B_{\frac{1}{2} + \frac{3}{\sqrt{n}}}(I_n)$ as we just showed $\{\rho(g) \pi(g)^*\}_{g\in\Gamma}$ is contained in it. Since $\frac{1}{2} + \frac{3}{\sqrt{n}} \leq 1$, $\mathcal{C}$ does not contain $0$. 
Notice that $\Gamma$ acts isometrically on the normed space $P \mathcal{M}$ (equipped with the Hilbert Schmidt norm) by $g \cdot A = \rho(g) A \pi(g)^*$.
This is indeed an action since $\rho = F \circ \rho'$ is a homomorphism, and it preserves the $\Vert \cdot \Vert_{HS,n}$ norm by unitary invariance (\ref{lem:HS_prop}, (1)):
For $A \in P\mathcal{M}$ we have $g \cdot A = (\rho(g) + (I_n - P)) A \pi(g)^*$ where $\rho(g) + (I_n - P)$ is unitary since $\rho(g) \in \mathcal{U}(P\mathcal{M}P)$.
Notice $\mathcal{C}$ is invariant under the action of $\Gamma$, as $\{\rho(g)\pi(g)^*\}_{g \in \Gamma}$ is. Since $\mathcal{C}$ is closed and convex in a Hilbert space, it contains a unique vector of minimum norm $A$.
Since the action is isometric, this operator $A$ is invariant under the $\Gamma$ action, so it satisfies that $\rho(g)A=A\pi(g)$  for all $g$ in $\Gamma$. By "functoriality": $\rho'(g)F^{-1}(A) = F^{-1}(A) \pi(g)$, So $F^{-1}(A):\mathbb{C}^n \to \mathbb{C}^{n-1}$ intertwines $\pi$ and $\rho'$. But since it is non zero and $\pi$ is irreducible, by Schur's Lemma it has to be injective. Which is a contradiction to the existence of such $\rho'$.

Thus, we obtained a family of $\psi_n$ of $\frac{4}{\sqrt{n-1}}$-homomorphisms for arbitrarily large $n$'s, that are bounded away by $\frac{1}{2}$ from any true $n-1$ dimensional representation. So, $\Gamma$ is not uniformly HS-stable.

\end{proof}

The following is where the residual finiteness and finite generation assumptions come in: In this situation being virtually abelian is equivalent to the property that dimensions of finite dimensional irreducible unitary representations are bounded.
\begin{lem} \label{lem:irrep_dims}

\emph{(1)} (Kaplansky \cite{Kaplansky}) If $\Gamma$ is virtually abelian with an abelian subgroup of index $\leq d$, then each finite dimensional irreducible complex representation of $\Gamma$ has dimension $\leq d$.

\emph{(2)} If $\Gamma$ is residually finite, finitely generated but not virtually abelian, then it has irreducible finite dimensional unitary representations of arbitrarily large dimensions.

\end{lem}

\begin{rem}
It should be noted that virtually abelian groups are \emph{exactly} the (discrete) groups with the property that any irreducible unitary representation is finite dimensional. This follows from works of Thoma \cite{Thoma} and Glimm \cite{Glimm}, see also \cite{Moore}. With that said, we will not be using this fact.
\end{rem}

\begin{proof}

(1) Let $A \leq \Gamma$ be an abelian subgroup of index $k, k \leq d$. Let $\rho: \Gamma \to GL(V)$ be a finite dimensional irreducible $\Gamma$-representation. Consider $\rho\vert_{A}$. This is as an $A$-representation and therefore it has an irreducible $A$ sub-representation (since $V$ is finite dimensional, it has an $A$- invariant subspace of minimal dimension).
Since $A$ is abelian, this irreducible subspace is spanned by a single vector $v_0$ and acted on by $A$ with some character $\chi$. Let $g_1, \dots, g_k$ denote some left coset representatives of $A$ in $\Gamma$.
Since $V$ is irreducible, we have $\spam(\Gamma \cdot v_0) = V$. Since $A$ preserves $\spam(v_0)$, we obtain: 
$$\spam(\Gamma \cdot v_0) = \spam\big(\bigcup_{i=1}^{k} g_i A \cdot v_0 \big) = \spam\{g_i\cdot v_0\}_{i=1}^{k}$$
Thus, $\dim(V) = \dim(\spam\{g_i\cdot v_0\}_{i=1}^{k}) \leq d$.

(2) Assume all finite dimensional irreducible unitary representations of $\Gamma$ (f.g. residually finite) have dimension less than $d$, we shall show it is virtually abelian.
For each $e \neq x \in \Gamma$, there exists a finite index normal subgroup $N_x \triangleleft \Gamma$ with $x\notin N_x\triangleleft\Gamma$.
Since irreducible finite dimensional representations of a finite group separate points, there exists a (unitary) irrep $\rho_{x}:\Gamma/N_x\to U(n)$
for some $n\in\mathbb{N}$ with $\rho_{x}(x\cdot N_x) \neq I_n$. If we pre-compose $\rho_{x}$ with the quotient map $\Gamma \twoheadrightarrow \Gamma / N_x$, we get an irreducible representation of $\Gamma$, which we will still denote by $\rho_{x}$. 
Thus by assumption: $n \leq d$, and $\rho_x(x) \neq I_n$. Since $\rho_x(\Gamma)$ are finite, by the Jordan-Schur Theorem (see for example \cite{TerryBlog}) there is an integer $C$ depending only on $d$ s.t. for all $x \neq e$, there is an abelian subgroup $A'_{x} \leq \rho_{x}(\Gamma)$ with index bounded by $C$.
Pull back $A_{x}=\rho_{x}^{-1}(A'_{x})$, we have $[\Gamma:A_{x}]\leq C$ by the correspondence theorem. Let $A=\bigcap_{x} A_{x} \leq \Gamma$.
This intersection is actually of finite index: it is known that $\Gamma$, being finitely generated, has finitely many subgroups of index less than $C$. And so, $A$ is a finite intersection of finite index subgroups and therefore is of finite index. Lastly, we show $A$ is abelian. 
Indeed, if $g,h\in A$ do not commute, then the commutator $[g,h] \ne 1$ satisfies $\rho_{x}([g,h])=1$ for all $e \neq x \in \Gamma$ because $\rho_x(g), \rho_x(h)$ belong to the abelian subgroup $A_x$. On the one hand $[g,h] \neq e$ in particular implies $\rho_{[g,h]} ([g,h])=1$. On the other hand,  $\rho_{[g,h]} ([g,h]) \ne 1$ by construction, which gives us a contradiction. Thus, $A$ is abelian.

\end{proof}

\section{Uniform stability of virtually abelian groups and beyond}

In this section we set out to prove Theorem \ref{thm:amenable_case}, which will give the "if" direction of our main result \ref{thm:main}.  Recall the following simple fact:

\begin{lem} \label{lem: unireps}

Let $\Gamma$ be any group and let $\pi: \Gamma \to U(n)$ be a unitary representation. Then it is completely reducible. That is, there exist  $W \in U(n)$ and irreducible representations $\pi_1: \Gamma \to U(n_1), \dots, \pi_k: \Gamma \to U(n_k)$ with $n_1+ \dots +n_k=n$ such that $\pi=W^* \diag(\pi_1, \dots, \pi_k)W$. 

\end{lem}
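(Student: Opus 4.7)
The plan is to prove the lemma by induction on the dimension $n$, using the standard dichotomy: either $\pi$ is already irreducible (base case of the recursion), or we can split $\mathbb{C}^n$ into two proper $\Gamma$-invariant orthogonal pieces and apply the induction hypothesis to each. The base case $n=1$ is trivial since every one-dimensional representation is automatically irreducible; we take $W = I_1, k=1, \pi_1 = \pi$.

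For the inductive step, suppose the claim holds for all dimensions strictly less than $n$, and let $\pi:\Gamma \to U(n)$ be given. If $\pi$ is irreducible, take $k=1$ and $W=I_n$. Otherwise, by definition there is a proper nonzero $\Gamma$-invariant subspace $V \subset \mathbb{C}^n$. The crucial observation, which is where unitarity is used, is that $V^\perp$ is also $\Gamma$-invariant: for any $v \in V$, $w \in V^\perp$ and $g \in \Gamma$ one has
\[
\langle \pi(g) w, v \rangle = \langle w, \pi(g)^* v \rangle = \langle w, \pi(g^{-1}) v \rangle = 0,
\]
since $\pi(g^{-1}) v \in V$. Thus $\pi(g) w \in V^\perp$, so $\pi$ restricts to unitary subrepresentations $\pi^{(1)}:\Gamma \to U(V)$ and $\pi^{(2)}:\Gamma \to U(V^\perp)$ on each factor of the orthogonal decomposition $\mathbb{C}^n = V \oplus V^\perp$.

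Now pick an orthonormal basis of $\mathbb{C}^n$ consisting of an orthonormal basis of $V$ followed by one of $V^\perp$, and let $W_0 \in U(n)$ be the unitary change-of-basis matrix from the standard basis to this new basis. Then $W_0 \pi W_0^*$ is block-diagonal with blocks corresponding to $\pi^{(1)}$ and $\pi^{(2)}$, both of dimension strictly smaller than $n$. Applying the induction hypothesis to each block gives further unitaries $W_1, W_2$ that simultaneously diagonalize them into irreducible blocks. Assembling these into a single block-diagonal unitary $W' = \diag(W_1, W_2)$ and composing with $W_0$, we obtain $W = W' W_0 \in U(n)$ such that $W \pi W^*$ is a block diagonal sum of irreducibles $\pi_1, \dots, \pi_k$, which is the desired conclusion after rearranging into the form $\pi = W^* \diag(\pi_1, \dots, \pi_k) W$.

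There is no real obstacle here; the only conceptually non-trivial point is the invariance of $V^\perp$, which is exactly where the hypothesis that $\pi$ takes values in the \emph{unitary} group (rather than $GL_n(\mathbb{C})$) is used. Everything else is bookkeeping with changes of basis and a straightforward induction, so I would present the argument compactly without dwelling on the recursive combination of the unitaries.
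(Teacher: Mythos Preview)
Your proof is correct and is the standard argument. The paper does not actually give a proof of this lemma; it is stated as a well-known fact (introduced with ``Recall the following simple fact'') and used without justification. Your inductive argument, resting on the unitarity-based invariance of $V^\perp$, is exactly the classical proof one would supply if asked to justify it.
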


\begin{proof}[Proof of Theorem \ref{thm:amenable_case}]

The "only if" part of (2) follows directly from Proposition \ref{prop:instab}. The converse direction of (2) clearly follows from (1).

For showing (1),  we will prove that for any $\Gamma$ in the class $\mathscr{H}$, any $\epsilon$-homomorphism $\varphi:\Gamma\to U(n)$ into the unitary group $U(n)$ with the Hilbert Schmidt norm $\Vert\cdot\Vert_{HS,n}$ and for
$$\delta=161\epsilon + 100\big( (\sqrt{2}+1)\sqrt{1+2500\epsilon^2}+\sqrt{d-1} \big)\epsilon$$ there exists a true representation $\rho: \Gamma \to U(n)$ with $\Vert \varphi(g) - \rho(g) \Vert_{HS,n} < \delta$ for all $g \in \Gamma$.

Firstly, by De Chiffre, Ozawa and Thom's result (Theorem \ref{thm:DOT_main}), there exists $n \leq m < n+2500\epsilon^{2}n$, an isometry $U:\mathbb{C}^{n}\hookrightarrow\mathbb{C}^{m}$ and a unitary representation $\pi:\Gamma\to U(m)$ such that:

\begin{equation} \label{eq:DOT}
\forall g\in\Gamma\text{, }\Vert\varphi(g)-U^{*}\pi(g)U\Vert_{HS,n}<161\epsilon
\end{equation}

If $\epsilon<\frac{1}{50\sqrt{n}}$, i.e. if $n+2500\epsilon^{2}n<n+1$, then $m=n$ automatically and we have nothing to prove ($\rho(g)= U^* \pi(g) U$ is as required). So, from now one we assume that $ \frac{1}{n} \leq 2500\epsilon^2$.

Applying Lemma \ref{lem: unireps} to $\pi$, we deduce that there exists $W \in U(m)$, $k \leq m$ and irreducible representations $\pi_1, ..., \pi_k$, such that $\pi(g)=W^{*}\diag(\pi_{1}(g),\dots,\pi_{k}(g))W$ for every $g \in \Gamma$. Note that $d_i:=dim(\pi_i) \le d$ for each $i=1 \dots k$ by assumption. 
Replace $U$ by $WU$ and $\pi(g)$ by $\diag(\pi_{1}(g),\dots,\pi_{k}(g))$. Note that the new $U$ is still an isometry with the same domain and codomain and we still have $\Vert\varphi(g)-U^{*}\pi(g)U\Vert_{HS,n}<161\epsilon$. Denote the standard basis by $\{e_1,...,e_m\}$. Notice $\pi(g)$ is now block-diagonal in this basis.

Let $P = UU^*$ be the projection onto the image of $U$. Denote by $[P]_n$ the $m \times n$ matrix of the first $n$  columns of $P$ in the basis $\{e_1,...,e_m\}$.  That is, $[P]_n: \mathbb{C}^n \to \mathbb{C}^m$ is given by $[P]_ne_i=Pe_i$ for $i=1 \dots n$. Define $M:= U^* [P]_n \in M_n(\mathbb{C})$. Notice $U^*P = U^* UU^* = U^*$, so this is just the square matrix consisting of the columns $\{U^* e_1, U^* e_2, \dots, U^* e_n\}$.

\begin{claim} \label{claim:PM}

\emph{(1)} $\Vert I_m - P \Vert_{HS,m}^2 =\Vert I_m - P^* P \Vert_{HS,m}^2  \leq 2500 \epsilon^2$ 

\emph{(2)} $\Vert [P]_n^* [P]_n - I_n \Vert_{HS,n} \leq  \sqrt{1+2500\epsilon^2} 50 \epsilon$

\emph{(3)} $\Vert M^* M - I_n\Vert_{HS,n} \leq \sqrt{1+2500\epsilon^2} 50 \epsilon$

\end{claim}

\begin{proof}

(1) Since $P$ is a rank $n$ projection, $I_m - P$ is a rank $m - n$ projection, we have that 
$$\Vert I_m - P \Vert_{HS,m}^2 = \frac{1}{m} \Tr((I_m - P)^* (I_m - P)) = \frac{1}{m} \Tr((I_m - P)) = \frac{m - n}{m} \leq 2500 \epsilon^2$$

(2)  Notice that $[P]_n^* [P]_n$ appears as a corner of $P^* P$ as follows:

$$
P^*P =
\begin{pmatrix}
[P]_n^* [P]_n & * \\
* & *
\end{pmatrix}, \; 
P^*P - I_m =
\begin{pmatrix}
[P]_n^* [P]_n - I_n & * \\
* & *
\end{pmatrix}
$$

Thus, since $n\Vert [P]_n^* [P]_n - I_n \Vert_{HS,n}^2$ is the sum of square entries of $[P]_n^* [P]_n - I_n$, we have that:
$$\frac{n}{m}\Vert [P]_n^* [P]_n - I_n \Vert_{HS,n}^2 \leq \Vert P^* P - I_m \Vert_{HS,m}^2 \leq 2500 \epsilon^2$$
So:
$$\Vert [P]_n^* [P]_n - I_n \Vert_{HS,n} \leq \sqrt{m/n} 50\epsilon \leq \sqrt{1+2500\epsilon^2} 50 \epsilon$$

(3) We just pull (2) back to $\mathbb{C}^n$, Since $P = UU^*$ is a projection:
$$\Vert M^* M - I_n\Vert_{HS,n} = \Vert [P]_n^* U U^* [P]_n - I_n\Vert_{HS,n} = \Vert [P]_n^* [P]_n - I_n\Vert_{HS,n} \leq
\sqrt{1+2500\epsilon^2} 50\epsilon $$

\end{proof}

Hence, we can apply Lemma \ref{lem:unitary_stab} to (3) in claim \ref{claim:PM} and obtain a unitary $R \in U(n)$ which is $\sqrt{1+2500\epsilon^2} 50 \epsilon$-close to $M$ in the $HS$-norm.

Define the block-diagonal matrix $\pi'(g):=\diag(\pi_1(g), \dots, \pi_r(g), 1, \dots, 1)$ for the maximal $r \leq k$ such that $d_1+ d_2 +\dots+ d_r \leq n$ and fill in the rest of the diagonal by $1$'s so that $\pi'(g)$ is an $n \times n$ matrix. Note that it guarantees $d_1+ d_2 +\dots+ d_k \geq n-d+1$, since all blocks have size $\leq d$. Finally, define the representation $\rho : \Gamma \to U(n)$ as $\rho(g)=R \pi'(g) R^*$. This is a unitary representation since $\pi'(g)$ is a unitary representation and $R$ is unitary.

{\it Remark:} If $d=1$, then each $d_i$ equals $1$ as well and therefore each $\pi_i$ is a character, so let us denote it by $\chi_i$ because this is more natural in this case.  Let $R_i$ be the $i-$th column of $R$. Using that $R$ is unitary, i.e. $R^*R=Id$, we obtain $\rho(g)R_i=R \pi'(g) (R^*R_i)=R \pi'(g) e_i=R \chi_i(g) e_i=\chi_i(g) (Re_i)=\chi_i(g) R_i$, so there is a nice short formula for $\rho$ in the orthonormal basis given by $\{R_1,...,R_n\}$. 

\begin{claim} \label{claim:inequalities}

\emph{(1)} $\Vert M\pi'(g)M^*-\rho(g) \Vert_{HS,n} \leq 100\epsilon\sqrt{1+2500\epsilon^2}$

\emph{(2)} $\Vert U^*\pi(g)U-M\pi'(g)M^* \Vert_{HS,n} \leq 100(\sqrt{2 + 5000\epsilon^2} + \sqrt{d-1})\epsilon$

\end{claim}

\begin{proof}

(1) Note that $\Vert \pi'(g) \Vert_{op}=\Vert R \Vert_{op}=1$ because these two operators are unitary, $\Vert M \Vert_{op}= \Vert U^*  [P]_n \Vert_{op} \leq \Vert U^* \Vert_{op} \Vert [P]_n \Vert_{op}  \leq \Vert U^* \Vert_{op} \Vert P \Vert_{op}=1$. Using this, (\ref{lem:HS_prop}, (2))  and   Claim \ref{claim:PM} we obtain:

\begin{align*}
\Vert M\pi'(g)M^*-\rho(g) \Vert_{HS,n} & =  \Vert M\pi'(g)M^*-R \pi'(g)R^* \Vert_{HS,n}      &\\
    & \leq \Vert (M-R)\pi'(g)M^* \Vert_{HS,n}+\Vert R\pi'(g)(M-R)^* \Vert_{HS,n} \\
    & \leq \Vert (M-R)\Vert_{HS,n} \Vert \pi'(g)M^* \Vert_{op}+\Vert R\pi'(g)\Vert_{op} \Vert(M-R)^* \Vert_{HS,n}     &(\ref{lem:HS_prop}, (2))\\
    & \leq 2\Vert (M-R)\Vert_{HS,n} \leq 100\epsilon\sqrt{1+2500\epsilon^2}  
\end{align*}

(2) Let $Q: \spam(e_1,...,e_n) \hookrightarrow \spam(e_1,...,e_m)$ be given by $Q e_i := e_i$ for $i=1 \dots  n$. It gives us $Q^*(e_i) = e_i$ for $i=1 \dots n$ and $Q^*(e_i) = 0$ for $i>n$. Denote $n \geq n':=d_1+\dots+d_k \geq n-d+1$.

 Recall the bound $\Vert I_m - P \Vert_{HS,m}^2 \leq 2500\epsilon^2$ in \ref{claim:PM}. We deduce:
$$\Vert [P]_nQ^*-I_m \Vert_{HS,m}^2=\frac{1}{m} \sum_{i=1}^{n} \Vert Pe_i - e_i \Vert^2+\frac{m-n}{m} \leq \frac{1}{m} \sum_{i=1}^{m} \Vert Pe_i - e_i \Vert^2+\frac{m-n}{n} \leq  2500\epsilon^2 +2500\epsilon^2 $$

So $\Vert [P]_nQ^*-I_m \Vert_{HS,m} \leq 50\sqrt{2}\epsilon$. Note that $\pi'(g)e_i=Q^*\pi(g)Qe_i$ for $i \leq n'$ and $\Vert \pi'(g)e_i-Q^*\pi(g)Qe_i \Vert = \Vert e_i-Q^*\pi(g)e_i \Vert \leq 2$ for $n \geq i>n'$, since $\pi'$ was defined to act trivially on these vectors. This implies:

$$\Vert Q^*\pi(g)Q-\pi'(g) \Vert_{HS,n}^2=\frac{1}{n} \sum_{i=1}^n \Vert Q^*\pi(g)Qe_i-\pi'(g)e_i \Vert^2=\frac{1}{n} \sum_{i=n'+1}^n \Vert Q^*\pi(g)Qe_i-\pi'(g)e_i \Vert^2 \leq \frac{4(n-n')}{n}$$

Now recall we assumed $\frac{1}{n} \leq 2500\epsilon^2$. Since $n - n' \leq d-1$, we deduce $\Vert Q^*\pi(g)Q-\pi'(g) \Vert_{HS,n} \leq 100\sqrt{d-1}\epsilon$.
Using (\ref{lem:HS_prop}, (2)) again and also using $\Vert U \Vert_{op}=\Vert U^* \Vert_{op}=1$, $\Vert [P]_n \Vert_{op}=\Vert [P]_n^* \Vert_{op} \le \Vert P \Vert_{op}= 1$ we obtain:

\begin{align*}
\Vert U^*\pi(g)U & -M\pi'(g)M^* \Vert_{HS,n} = \Vert U^*\pi(g)U-U^*[P]_n\pi'(g)[P]_n^*U \Vert_{HS,n}      &\\
& \leq \sqrt{\frac{m}{n}}\Vert \pi(g)-[P]_n\pi'(g)[P]_n^* \Vert_{HS,m}    &(\ref{lem:HS_prop}, (2)) \\
& \leq \sqrt{\frac{m}{n}}\Vert \pi(g)-[P]_nQ^*\pi(g)Q[P]_n^* \Vert_{HS,m}    \\
& + \sqrt{\frac{m}{n}}\Vert [P]_nQ^*\pi(g)Q[P]_n^* - [P]_n\pi'(g)[P]_n^* \Vert_{HS,m}    \\
& \leq \sqrt{\frac{m}{n}}\Vert (I_m-[P]_nQ^*)\pi(g) \Vert_{HS,m} + \sqrt{\frac{m}{n}}\Vert [P]_nQ^*\pi(g)(I_m-Q[P]_n^*) \Vert_{HS,m}   \\
& +\Vert Q^*\pi(g)Q-\pi'(g) \Vert_{HS,n}    &(\ref{lem:HS_prop}, (2))\\
& \leq 2\sqrt{\frac{m}{n}}\Vert (I_m-[P]_nQ^*) \Vert_{HS,m}+100\sqrt{(d-1)}\epsilon \\
& \leq \sqrt{1 + 2500\epsilon^2}100\sqrt{2}\epsilon + 100\sqrt{(d-1)}\epsilon   &(\ref{lem:HS_prop}, (2))
\end{align*}

Notice that in the second to last line we used the fact that $\Vert (I_m - [P]_nQ^*) \Vert_{HS,m} = \Vert (I_m - Q[P]_n^*) \Vert_{HS,m}$.

\end{proof}

By combining both inequalities of Claim \ref{claim:inequalities}, we obtain:
\begin{align*}
\Vert U^* \pi(g) U - \rho(g) \Vert_{HS,n} & \leq \Vert U^* \pi(g) U - M \pi'(g) M^* \Vert_{HS,n}+\Vert \rho(g)- M \pi'(g) M^*\Vert_{HS,n} \\
& \leq  100(\sqrt{2 + 5000\epsilon^2} + \sqrt{d-1})\epsilon+100\epsilon\sqrt{1+2500\epsilon^2} \\
& = 100((\sqrt{2}+1)\sqrt{1+2500\epsilon^2}+\sqrt{d-1})\epsilon
\end{align*}

Hence, using the inequality guaranteed by De Chiffre, Ozawa, Thom (\ref{eq:DOT}) and the triangular inequality:
\begin{align*}
\Vert \varphi(g) - \rho(g) \Vert_{HS,n} & \leq \Vert U^* \pi(g) U - \varphi(g) \Vert_{HS,n} + \Vert U^* \pi(g) U - \rho(g) \Vert_{HS,n} \\
& < 161\epsilon + 100\big( (\sqrt{2}+1)\sqrt{1+2500\epsilon^2}+\sqrt{d-1} \big)\epsilon
\end{align*}

\end{proof}

As a special case of Theorem \ref{thm:amenable_case}, we obtain uniform HS-stability in the virtually abelian setting:

\begin{proof} [Proof of Corollary \ref{cor:vir_abelian_case}] 

Let $G$ be from the class $\mathscr{G}$ of virtually abelian groups with an abelian subgroup of index $\leq d$. Lemma \ref{lem:irrep_dims}, (1) tells us that the dimensions of finite dimensional irreducible unitary representations of $G$ are bounded by $d$ as well.
Since any virtually abelian group is amenable, we conclude that $\mathscr{G} \subseteq \mathscr{H}$ and therefore Theorem \ref{thm:amenable_case} applies directly. 

\end{proof}

We now deduce Theorem \ref{thm:main} as a direct consequence of the last section and the present one:

\begin{proof}[Proof of Theorem \ref{thm:main}]
Let $\Gamma$ be finitely generated and residually finite. Assume it is virtually abelian. Then in particular, Corollary \ref{cor:vir_abelian_case} shows it is uniformly HS-stable. 
Conversely, assume $\Gamma$ is not virtually abelian. By Lemma \ref{lem:irrep_dims}, (2) we know $\Gamma$ must have irreducible unitary representations of arbitrarily large dimensions. As a result, Proposition \ref{prop:instab} shows $\Gamma$ is not uniformly HS-stable.
\end{proof}

\begin{bibdiv}
\begin{biblist}
\bib{BC}{article}{
  title={Stability of approximate group actions: uniform and probabilistic},
  author={Becker, O.},
  author={Chapman, M.},
  journal={arXiv: Group Theory},
  note={Available at \url{https://arxiv.org/abs/2005.06652}},
  year={2020}
}

\bib{BOT}{article}{
   author={Burger, M.},
   author={Ozawa, N.},
   author={Thom, A.},
   title={On Ulam stability},
   journal={Israel J. Math.},
   volume={193},
   date={2013},
   number={1},
   pages={109--129},
}

\bib{DOT}{article}{
  title={Operator algebraic approach to inverse and stability theorems for amenable groups},
  author={De Chiffre, M.},
  author={Ozawa, N.},
  author={Thom, A.},
  journal={Mathematika},
  year={2019},
  volume={65},
  pages={98-118},
  note = {Available at \url{https://arxiv.org/abs/1706.04544}}
}

\bib{Glimm}{article}{
  title={Type $I$ $C^*$-Algebras},
  author={Glimm, J.},
  journal={Annals of Mathematics},
  year={1961},
  volume={73},
  pages={572}
}

\bib{GH}{article}{ 
  title={Inverse and stability theorems for approximate representations of finite groups},
  author={Gowers, W. T.},
  author={Hatami, O.},
  date={2017},
  journal={ Matematicheskii Sbornik,  Volume 208, Number 12, Pages 70-106},
  note={Avaliable at arXiv:1510.04085v2}
}

\bib{JushenkoMonod}{article}{
  author={Juschenko, K.},
  author={Monod, N.},
  title={Cantor systems, piecewise translations and simple amenable groups},
  date={2013/9/1},
  journal={Annals of Mathematics},
  pages={775-787},
  note={Available at \url{https://arxiv.org/pdf/1204.2132.pdf}},
}

\bib{Kaplansky}{article}{
   author={Kaplansky, I.},
   title={Groups with Representations of Bounded Degree},
   volume={1},
   DOI={10.4153/CJM-1949-011-9},
   number={1},
   journal={Canadian Journal of Mathematics},
   publisher={Cambridge University Press},
   year={1949},
   pages={105–112}
}

\bib{Kazh}{article}{
   author={Kazhdan, D.},
   title={On $\varepsilon$-representations},
   journal={Israel J. Math.},
   volume={43},
   date={1982},
   number={4},
   pages={315-323},
}

\bib{Malcev}{article}{
  author={Malcev, A. I.},
  title={On isomorphic representations of infinite groups of matrices},
  journal={Mat. Sb.},
  date={1940},
  volume={9},
  pages={405-422},
  note={(also in Amer. Math. Sot. Transl. 45 (1965), l-18)}
}

\bib{Moore}{article}{
  author={Moore, C.},
  title={Groups with finite dimensional irreducible representations},
  journal={Trans. Amer. Math. Soc.},
  year={1972},
  volume={166},
  pages={401-410}
}

\bib{SV}{article}{
  author={Slofstra, W.},
  author={Vidick, T.},
  title={Entanglement in Non-local Games and the Hyperlinear Profile of Groups},
  journal={Annales Henri Poincar{\'e}},
  year={2018},
  volume={19},
  pages={2979-3005}
}

\bib{TerryBlog}{article}{
  author={Tao, T.},
  title={The Jordan-Schur theorem},
  date={2011},
  eprint={{https://terrytao.wordpress.com/2011/10/05/the-jordan-schur-theorem/}}
}

\bib{Thoma}{article}{
  title={Eine Charakterisierung diskreter Gruppen vom Typ I},
  author={Thoma, E.},
  journal={Inventiones mathematicae},
  year={1968},
  volume={6},
  pages={190-196}
}

\bib{Ulam}{book}{
   author={Ulam, S. M.},
   title={A collection of mathematical problems},
   series={Interscience Tracts in Pure and Applied Mathematics, no. 8},
   publisher={Interscience Publishers, New York-London},
   date={1960},
   pages={xiii+150}
}

\bib{VidickBlog}{article}{
  author={Vidick, T.},
  title={Pauli braiding},
  date={2017},
  eprint={{https://mycqstate.wordpress.com/2017/06/28/pauli-braiding/}},
}

\end{biblist}
\end{bibdiv}

\end{document}